\newfont{\footsc}{cmcsc10 at 8truept}
\newfont{\footbf}{cmbx10 at 8truept}
\newfont{\footrm}{cmr10 at 10truept}
\renewcommand\paragraph{\@startsection{paragraph}{4}{\z@}
                                    {2ex \@plus.5ex \@minus.2ex}
                                    {-1em}
                                    {\normalfont\normalsize\bfseries}}
\renewcommand\subparagraph{\@startsection{subparagraph}{5}{\parindent}
                                       {2ex \@plus.5ex \@minus .2ex}
                                       {-1em}
                                      {\normalfont\normalsize\bfseries}}
\newlength{\BiblioSpacing}
\renewenvironment{thebibliography}[1]{
\begin{oldthebibliography}{#1}
\setlength{\parskip}{\BiblioSpacing}
\setlength{\itemsep}{\BiblioSpacing}
}
{
\end{oldthebibliography}
}
\def\abstractname{Abstract -}   
\def\abstract{\begin{adjustwidth}{1cm}{1cm} \par    \footnotesize \noindent {\bf \abstractname} 
\def\endabstract{ \end{adjustwidth} \smallskip }}
\newtheorem{theorem}{Theorem}[section]}
\newtheorem{definition}[theorem]{Definition}}
\newtheorem{lemma}[theorem]{Lemma}}
\def\proof{{\noindent \bf Proof.~}}
\def\endproof{\hspace{\fill}{\boldmath $\Box$}\par}
    \let\pgfmathfloat@loc@TMPr=\pgfmathresult
    \let\pgfmathfloat@loc@TMPp=\pgfmathresult%
    \edef\pgfmathfloat@loc@TMPx{#1}%
    \edef\pgfmathfloat@loc@TMPa{#2}%
    \edef\pgfmathfloat@loc@TMPb{#3}%
            \let\pgfmathfloat@loc@TMPn=\pgfmathresult%
            \pgfmathpow{\pgfmathfloat@loc@TMPa}{\pgfmathfloat@loc@TMPn}%
            \let\pgfmathfloat@loc@TMPe=\pgfmathresult%
            \pgfmathpow{\pgfmathfloat@loc@TMPb}{\pgfmathfloat@loc@TMPn}%
            \pgfmathmultiply{\pgfmathresult}{\pgfmathfloat@loc@TMPp}%
            \pgfmathmultiply{\pgfmathresult}{\pgfmathfloat@loc@TMPx}%
            \pgfmathmultiply{\pgfmathresult}{\pgfmathfloat@loc@TMPe}%
            \pgfmathadd{\pgfmathresult}{\pgfmathfloat@loc@TMPr}%
            \let\pgfmathfloat@loc@TMPr=\pgfmathresult
\title{Upper bounds for the Hausdorff dimension of Weierstrass curves.}
\author{Ted Alexander and Tommy Murphy }
\date{}
\begin{document}
\setcounter{page}{1}
\maketitle
\thispagestyle{fancy}

\vskip 1.5em

\begin{abstract}
We produce an upper bound for the Hausdorff dimension of the graph of a Weierstrass-type function. Whilst strictly weaker than existing results, it has the advantage of being directly computable from the theory of hyperbolic iterated function systems (IFS).
\end{abstract}

\section{Introduction}

The concept of Hausdorff dimension is  intimately bound up with the study of fractals; for instance Mandelbrot's well-known assertion that a fractal is characterized by the  Hausdorff dimension strictly exceeding the topological dimension. In some special cases it is easy to compute. The Koch curve  can be defined using four translated contractions of itself. Since the scaling factor is $1/3$ and four non-warping contractions are used, the Hausdorff dimension of the Koch Curve is exactly $\log_34$. In contrast, it is a remarkable fact that for arguably the earliest known example of a fractal, namely Weierstrass' monster,  the precise computation of the Hausdorff dimension was an open problem until quite recently. The essential difficulty is that the contraction mappings defining the fractal have uneven warping, which complicates matters significantly.  

The Hausdorff dimension of the graph of the function
 \[
 W_{a, b}(x) = \sum_{n=0}^{\infty} a^n\cos(2\pi b^nx)
 \]
 where $x\in \mathbb{R}$, $b\in \mathbb{N}$, and  $\frac{1}{b}< a < 1$, was long conjectured to be 
$D : = 2 + \log_ba.$ This was  settled  by  Shen \cite{s} in 2018. The classical examples of Weierstrass were of the form $b\in \mathbb{N}$ and $ab + 1> \frac{3\pi}{2}$. These became famous in the mathematical world as they were the first published examples of functions which are everywhere continuous yet nowhere differentiable.

\begin{figure}[!htb]
\begin{center}
\begin{tikzpicture}
\begin{axis}[axis lines=middle, axis equal image, enlarge y limits=true]
\addplot [thick, black, samples=301, line join=round, domain=-2:2] {weierstrass(x,0.5,3,10)};
\end{axis}
\end{tikzpicture}
\caption{A classical Weierstrass curve}
\end{center}

\end{figure}
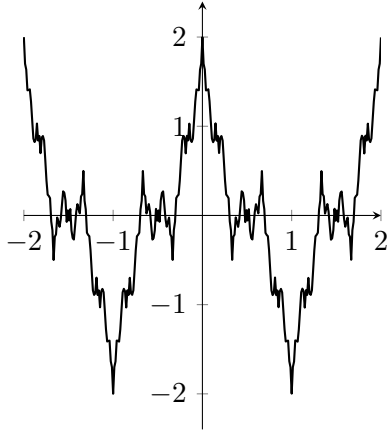

Throughout this paper $b\in \mathbb{N}$ and $\frac{1}{b}< a < 1$. Let $\phi$ be a $C^1$ function defined on $[0,1]$, and also denote by $\phi$ its $\mathbb{Z}$-periodic extension to $\mathbb{R}$. Set 
 \[
 w^{\phi}_{a,b}(x) = \sum_{n=0}^{\infty} a^n\phi(b^nx).
 \]
If $F\subset \mathbb{R}^2$, let $\dim_{H}[F]$ denote the Hausdorff dimension. Given a function $w: \mathbb{R}\rightarrow \mathbb{R}$, $\text{graph}(w)\subset \mathbb{R}^2$ will denote the graph of the function.   We can now state Shen's Theorem:
 
\begin{theorem}(Shen) There exists a $K_0 =K_0(\phi,b)>1$ such that if $1<ab<K_0$, then 
\[
\dim_{H}[ \textrm{graph}( w^{\phi}_{a,b})] = D.
\]
\end{theorem}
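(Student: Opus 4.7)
The plan is to split the statement into its routine upper bound and its considerably deeper lower bound. The upper bound $\dim_H[\mathrm{graph}(w^\phi_{a,b})] \leq D$ requires no smallness on $ab$: since $\phi\in C^1$ is Lipschitz, splitting the defining sum at the scale $|x-y|$ shows that $w^\phi_{a,b}$ is $\alpha$-H\"older with exponent $\alpha = -\log_b a$, and a standard $\delta$-net count on the graph yields $\dim_B \leq 2 - \alpha = D$, whence $\dim_H \leq D$.

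For the lower bound, I would first reinterpret the graph dynamically. The scaling relation $w^\phi_{a,b}(x) = \phi(x) + a\,w^\phi_{a,b}(bx)$, together with the $1$-periodicity of $\phi$, shows that the graph, restricted to $[0,1]$, is the unique compact invariant set of the hyperbolic iterated function system
\[
S_j(x,y) = \left(\tfrac{x+j}{b},\ a y + \phi\!\bigl(\tfrac{x+j}{b}\bigr)\right), \quad j=0,1,\ldots,b-1.
\]
Each $S_j$ has the same diagonal linear part, with singular values $(a,1/b)$, and since $1/b<a$ we are in the Bedford--McMullen-type regime in which a short calculation gives the singular-value (affinity) dimension equal to $2+\log_b a = D$. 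The goal is to show that the true Hausdorff dimension of this self-affine attractor coincides with its affinity dimension for the specific translation family prescribed by $\phi$. I would pursue this through a natural dimension-$D$ measure on the attractor (the K\"aenm\"aki or $D$-equilibrium measure), reducing via a B\'ar\'any--Hochman--Rapaport-type variational principle to a \emph{transversality} or \emph{exponential separation} statement: the vertical offsets produced by the level-$n$ cylinder words $\mathbf{i}=(j_1,\ldots,j_n)$ must be, on average, as well separated as the dimension budget allows. This is the self-affine analogue of the absolute-continuity question for Bernoulli convolutions.

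The main obstacle is verifying this transversality. It degenerates at the conformal boundary $ab=1$, where $D=1$ and the attractor is a Lipschitz curve, but it is stable under perturbations of $ab$ upward from $1$. I would therefore push a perturbative argument: for $ab$ sufficiently close to $1$ (closeness depending on $\|\phi'\|_\infty$ and $b$), the iterated compositions $S_{j_1}\cdots S_{j_n}$ have strong-stable cone fields that stay uniformly transverse to the vertical direction, so that the fibre offsets are controlled by the values of $\phi'$ along orbits of the expanding map $x\mapsto bx\bmod 1$. A Hochman-style entropy-increase dichotomy would then upgrade the resulting exponential separation to the dimension identity $\dim_H\mu = D$ for the chosen measure, yielding $\dim_H[\mathrm{graph}(w^\phi_{a,b})] \geq D$. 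The explicit constant $K_0 = K_0(\phi,b)$ is precisely the range on which this perturbative control can be sustained; weakening the hypothesis to the full range $ab>1$ would require the substantially sharper harmonic-analytic arguments that appear in Shen's paper.
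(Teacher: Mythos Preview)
The paper does not prove this theorem. Shen's result is quoted as background (with the reference to \cite{s}) and is used only to motivate the paper's own, strictly weaker, Theorem~\ref{maint}. There is therefore no ``paper's own proof'' to compare your proposal against; the only argument the paper gives in this direction is the remark that the upper bound $\dim_H \le D$ follows from the box-counting estimate via H\"older continuity, which matches the first paragraph of your sketch.

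As to the proposal itself: the upper-bound half is fine and standard. The lower-bound half is a plausible high-level narrative of the modern self-affine toolkit (K\"aenm\"aki measure, exponential separation, Hochman-type entropy increase), but as written it is a strategy outline rather than a proof. The genuinely hard step---verifying the required transversality/separation for the \emph{specific} offsets produced by $\phi$---is asserted (``a perturbative argument'' near $ab=1$) rather than carried out, and it is precisely this step that occupies the bulk of Shen's paper. Shen's actual route goes through a Ledrappier--Young-type formula for the SRB/projected measure and a delicate verification that the relevant conditional measures along stable fibres are absolutely continuous (building on Tsujii's transversality framework), rather than through the B\'ar\'any--Hochman--Rapaport machinery you invoke. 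So your sketch is in the right spirit but does not constitute a proof, and in any case cannot be compared with the present paper, which does not attempt one.
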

Clearly the classical examples of Weierstrass follow on setting $\phi(x) = \cos(2\pi x)$ and choosing $a$ and $b$ appropriately.  Since $D>1$, this in particular produces many examples of fractals. 

Shen's work is the culmination of many years of research beginning with the work of Bescovitch-Ursell \cite{bu}.  Of particular interest to us is the well-known estimate 
\begin{equation}\label{ubd}
\dim_{H}[ \textrm{graph}( w^{\phi}_{a,b})] \leq D.
\end{equation} 
The argument to establish this is standard, but  indirect; see Section 2 of \cite{ba} for details.  One uses the fact $\dim_{H}[F]\leq \dim_B[F]$, where $\dim_B$ denotes the box-counting dimension and $F\subset \mathbb{R}^2$. The box dimension of $\textrm{graph}(w)$ is then estimated via  studying local oscillations in terms of H\"older continuity. Consequently, the main question in the field has been to understand lower bounds for the Hausdorff dimension, and Shen's theorem answers this question for a wide family of examples. We also refer the interested reader to the related works \cite{bbr}, \cite{hl}, \cite{hunt}, and \cite{t}.

Our result is the following. 
\begin{theorem}\label{maint}
If $|\phi'(x)|\leq 1$ and $a^2 + \frac{a}{b}<1$,   then \[
\dim_{H}[ \textrm{graph} ( w^{\phi}_{a,b})] \leq \log_h(1/b),
\]
where
\[
	h = \sqrt{\frac{1}{2}\left[\frac{2}{b^2}+a^2+\sqrt{\frac{4}{b^4}+a^4}\right]}.
\]
\end{theorem}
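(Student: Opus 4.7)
The plan is to realise the graph $G$ of $w^{\phi}_{a,b}$ restricted to $[0,1]$ as the attractor of a hyperbolic iterated function system and then apply a standard Moran-type covering bound. The functional equation $w^{\phi}_{a,b}(x) = \phi(x) + a\, w^{\phi}_{a,b}(bx)$, together with the $\mathbb{Z}$-periodicity of $\phi$, suggests defining, for each $i \in \{0,1,\dots,b-1\}$,
\[
  f_i(x,y) = \left(\frac{x+i}{b},\; ay + \phi\!\left(\frac{x+i}{b}\right)\right).
\]
A direct check shows that with $M = \|\phi\|_{\infty}/(1-a)$, the compact rectangle $R = [0,1]\times[-M,M]$ contains $G$ and satisfies $f_i(R)\subset R$, and that $G = \bigcup_{i=0}^{b-1} f_i(G)$; hence $G$ is the unique attractor of the IFS $\{f_0,\dots,f_{b-1}\}$.

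The technical heart is a uniform contraction estimate for this IFS. Computing
\[
  Df_i(x,y) = \begin{pmatrix} 1/b & 0 \\ \phi'((x+i)/b)/b & a \end{pmatrix},
\]
the squared operator norm $\|Df_i\|_{op}^2$ is the larger eigenvalue of the symmetric positive matrix $(Df_i)^T Df_i$, which has trace $\frac{1+(\phi')^2}{b^2}+a^2$ and determinant $\frac{a^2}{b^2}$. Because the larger eigenvalue of such a $2\times 2$ matrix is monotone in the trace once the determinant is fixed, the bound $|\phi'|\leq 1$ yields
\[
  \|Df_i\|_{op}^2 \leq \frac{1}{2}\!\left[\frac{2}{b^2}+a^2+\sqrt{\frac{4}{b^4}+a^4}\right] = h^2.
\]
Since $R$ is convex, the mean value inequality promotes this pointwise bound into the statement that every $f_i$ is $h$-Lipschitz on $R$.

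To finish, a short algebraic computation (using also $1/b<a<1$) shows that the hypothesis $a^2 + a/b<1$ forces $h<1$, so the IFS is genuinely contracting. Iterating, each composition $f_{i_1}\circ\cdots\circ f_{i_n}$ is $h^n$-Lipschitz on $R$; since $G = \bigcup_{(i_1,\dots,i_n)} f_{i_1}\circ\cdots\circ f_{i_n}(G)$, the $b^n$ images $f_{i_1}\circ\cdots\circ f_{i_n}(R)$ cover $G$ by sets of diameter at most $h^n\cdot \mathrm{diam}(R)$. For any $s > \log_h(1/b)$ one has $bh^s<1$, whence $\mathcal{H}^s(G) \leq \mathrm{diam}(R)^s (bh^s)^n \to 0$ as $n \to \infty$, giving $\dim_H G \leq \log_h(1/b)$. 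The step I expect to be the main obstacle is the singular value calculation above: identifying the worst case $|\phi'|=1$ and verifying that the resulting maximum of $\|Df_i\|_{op}^2$ is exactly the quantity $h^2$ appearing in the statement. Once this uniform Lipschitz bound is in hand, the rest of the argument is a standard Moran-type covering.
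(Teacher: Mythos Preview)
Your proposal is correct and follows essentially the same route as the paper: both realise the graph as the attractor of the same IFS, bound the Lipschitz constant of each map by the square root of the top eigenvalue of the identical symmetric matrix (you reach it via $(Df_i)^T Df_i$, the paper via the mean value theorem applied to $\phi$ and then writing $d(S_i\mathbf{x}_1,S_i\mathbf{x}_2)^2$ as a quadratic form), maximise over $|\phi'|\le 1$ to obtain $h$, and finish with the Moran bound $bh^s=1$. The only cosmetic differences are that you invoke the pointwise Jacobian bound plus the mean value inequality and spell out the covering argument directly, whereas the paper quotes the IFS dimension lemma from Falconer and relies on its separate contraction lemma for $h<1$.
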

It is not hard to see that $\log_h (1/b)> D$; write $D=\log_b(b^2a)$ and change the base of the logarithm on the right hand side.  This means our bound is always worse than Equation (\ref{ubd}).  We already know  that this must be the case since Shen's Theorem states that $D$ actually {\textit{is}} the Hausdorff dimension for a wide class of examples. The merit of our result is that it avoids estimating the Hausdorff dimension via the approach of estimating  the box-counting dimension, but rather uses the theory of iterated function systems (IFS). Our main technical achievement is the observation that there is a global upper contraction bound on the IFS determined by $\phi$ under our assumptions. 

Many standard examples of IFS are given by linear transformations, written as $2\times 2$ matrices with constant coefficients (see \cite{bd2} for many such examples). The techniques of our proof will also apply in these instances. The case of Weierstrass curves was more interesting to us as the coefficients of the matrix vary, so estimating the contraction factors is harder. We would expect further examples of fractals could also be analyzed in this framework.

\section{Setup}
Here we set up some basic notation and definitions. Throughout we work in the standard metric space $W= [0,1]\times \mathbb{R}\subset \mathbb{R}^2$. Standard texts explaining the basics of IFS are \cite{bd} and \cite{f}, following the foundational work of Hutchinson \cite{h}.  Let $\{S_i\}_{i=1}^b$ be  contraction mappings on $W$ with contraction factors $\{u_i\}_{i=1}^b$.  The class of non-empty compact subsets of $W$ equipped with the associated Hausdorff metric then has associated contraction mappings, also denoted $S_i$, with the same contraction factors.   Let $F$ be the invariant set for $\{S_i\}$, i.e.
   \[
 F=\bigcup_{i=1}^bS_i(F).
\]
 The basic idea underlying the theory of IFS \cite{bd} is that the existence and uniqueness of  $F$ is granted by the Banach fixed-point Theorem.

\begin{definition} Given a set $F\subset \mathbb{R}^2$ with $\delta$-covers $U_i$, we define the \underline{Hausdorff $s$-content} $H^s(F)$ to be $H^s(F)=\inf \sum_i |U_i|^s$,
where the infimum is taken over all such possible $\delta$-covers. The Hausdorff dimension $\dim_{H}(F)$ is  defined to be the infimal positive $s$ such that $H^s(F)$ is finite.
\end{definition}

\begin{lemma}\label{l1} $\dim_H[F]\leq s$, where  $\sum_{i=1}^bu_i^s=1$.
 
\end{lemma}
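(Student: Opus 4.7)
The plan is to exploit the self-similarity relation $F = \bigcup_{i=1}^{b} S_i(F)$ iteratively to produce a family of efficient covers of $F$ indexed by finite words in $\{1,\dots,b\}^k$, and to show that the associated Hausdorff $s$-sums remain bounded uniformly in $k$ precisely because $\sum_i u_i^s = 1$.

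First I would iterate the invariance identity: for every integer $k\ge 1$,
\[
F = \bigcup_{(i_1,\dots,i_k)\in\{1,\dots,b\}^k} S_{i_1}\circ S_{i_2}\circ\cdots\circ S_{i_k}(F).
\]
Writing $S_{\mathbf{i}} := S_{i_1}\circ\cdots\circ S_{i_k}$ and $u_{\mathbf{i}} := u_{i_1}u_{i_2}\cdots u_{i_k}$, the contraction property gives
\[
\bigl|S_{\mathbf{i}}(F)\bigr| \le u_{\mathbf{i}}\,|F|,
\]
where $|\cdot|$ denotes diameter. Since each $u_i<1$, choosing $k$ large enough ensures $\max_{\mathbf{i}} u_{\mathbf{i}}|F| \le \delta$, so $\{S_{\mathbf{i}}(F)\}_{\mathbf{i}\in\{1,\dots,b\}^k}$ is a genuine $\delta$-cover of $F$.

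Next I would bound the $s$-content using the hypothesis $\sum_{i=1}^b u_i^s = 1$. The key multiplicative calculation is
\[
\sum_{\mathbf{i}\in\{1,\dots,b\}^k} \bigl|S_{\mathbf{i}}(F)\bigr|^s
\le |F|^s \sum_{\mathbf{i}\in\{1,\dots,b\}^k} u_{\mathbf{i}}^{\,s}
= |F|^s \Bigl(\sum_{i=1}^b u_i^s\Bigr)^{k} = |F|^s,
\]
where the first equality is just a factorization of the product sum over $k$-tuples into $k$ copies of $\sum_i u_i^s$. Hence $H^s(F)\le |F|^s <\infty$, and letting $\delta\to 0$ (by taking $k\to\infty$) gives the same bound on the $s$-dimensional Hausdorff measure. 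By the definition of $\dim_H$, this forces $\dim_H[F]\le s$.

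There is no real obstacle here; the only thing one should be careful about is checking that $|F|<\infty$, which follows from the fact that $F\subset W$ and $F$ is compact as the fixed point of the Hutchinson operator on the space of non-empty compact subsets of $W$ with the Hausdorff metric. The argument uses nothing about $\phi$, $a$, or $b$ beyond the contraction property of the $S_i$, which is exactly the generality required for the subsequent application to the Weierstrass IFS.
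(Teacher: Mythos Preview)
Your argument is correct and is precisely the standard iterative-covering proof that the paper defers to Falconer (Theorem~8.8/Exercise~8.5 of \cite{f}); so the approaches coincide in substance, with your version simply spelling out what the paper cites. One small bonus of your write-up: it makes transparent that the open set condition the paper mentions is not actually needed for this upper-bound direction --- only the contraction ratios enter.
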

\proof See Theorem 8.8/Exercise 8.5 of \cite{f}. The open set condition  required is satisfied taking $V$ to be a small open tubular neighbourhood of  $F\backslash \lbrace x = 0, 1\rbrace$. \endproof

\section{Contraction mappings associated to Weierstrass curves IFS}

Endow $W\subset \mathbb{R}^2$ with its usual metric space structure. It is standard \cite{ba} to rewrite the graph of a Weierstrass curve as an IFS using the mappings
\begin{equation}\label{con1}
S_i(x,y) = \left(\frac{x+i-1}{b}, ay + \phi\left(\frac{x+i-1}{b}\right)\right)\ \ \ \ \ \ \ \ \ \ 1\leq i \leq b.
\end{equation}
There are various related definitions of an IFS in the literature. Our definition, following \cite{bd}, is sometimes referred to as a  hyperbolic IFS: each $S_i$ is a contraction mapping. In \cite{ba} and \cite{bbr}, Equation (\ref{con1}) defines a smooth nonlinear system with two negative Lyapunov exponents which they also call an IFS. This is a little different to our definition because the mappings (\ref{con1}) are not assumed to be contraction mappings. However, under our additional assumptions each $S_i$ is a contraction mapping and so we can apply some standard techniques to bound the Hausdorff dimension.

\begin{lemma}\label{l2}  Under the assumptions of  Theorem \ref{maint}, each $S_i$ is a contraction mapping. 
\end{lemma}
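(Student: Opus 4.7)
The plan is to bound the Lipschitz constant of each $S_i$ via a quadratic form computation. Pick $(x_1,y_1),(x_2,y_2)\in W$, set $u=x_1-x_2$ and $v=y_1-y_2$, and apply the mean value theorem to $\phi$ to produce $\xi$ with $\phi\big(\tfrac{x_1+i-1}{b}\big)-\phi\big(\tfrac{x_2+i-1}{b}\big)=\phi'(\xi)\cdot u/b$. Writing $p:=\phi'(\xi)$ (so $|p|\leq 1$), the squared distance between images expands as
\[
|S_i(x_1,y_1)-S_i(x_2,y_2)|^{2} \;=\; \frac{u^{2}}{b^{2}}+\Big(av+\frac{p\,u}{b}\Big)^{\!2} \;=\; (u,v)\,M_p\,(u,v)^{\top},
\]
where $M_p$ is the symmetric matrix
\[
M_p \;=\; \begin{pmatrix}\dfrac{1+p^{2}}{b^{2}} & \dfrac{ap}{b} \\[2pt] \dfrac{ap}{b} & a^{2} \end{pmatrix}.
\]

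Hence the squared Lipschitz constant of $S_i$ is bounded by $\sup_{|p|\leq 1}\lambda_{\max}(M_p)$. The usual formula for eigenvalues of a $2\times 2$ symmetric matrix gives
\[
\lambda_{\max}(M_p) \;=\; \tfrac{1}{2}\Big[\tfrac{1+p^{2}}{b^{2}}+a^{2}+\sqrt{\big(\tfrac{1+p^{2}}{b^{2}}-a^{2}\big)^{2}+\tfrac{4a^{2}p^{2}}{b^{2}}}\Big].
\]
Writing $t=p^{2}\in[0,1]$, both the affine term and the radicand are visibly nondecreasing in $t$ (the derivative of the radicand is $2(1+t)/b^{4}+2a^{2}/b^{2}>0$), so $\lambda_{\max}$ is maximized at $t=1$. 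Substituting $t=1$ collapses the radicand to $4/b^{4}+a^{4}$, producing exactly the quantity $h^{2}$ appearing in Theorem~\ref{maint}.

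The main obstacle is then verifying $h<1$ under the hypothesis $a^{2}+a/b<1$. A direct manipulation (square the defining inequality $\sqrt{4/b^{4}+a^{4}}<2-a^{2}-2/b^{2}$ and simplify) shows that $h^{2}<1$ is equivalent to $b^{2}(1-a^{2})>2-a^{2}$. Squaring the hypothesis written as $b(1-a^{2})>a$ yields $b^{2}(1-a^{2})^{2}>a^{2}$, hence $b^{2}(1-a^{2})>a^{2}/(1-a^{2})$. When $a^{2}\geq 2-\sqrt{2}$ one checks the elementary inequality $a^{2}/(1-a^{2})\geq 2-a^{2}$ (which rearranges to $a^{4}-4a^{2}+2\leq 0$), giving $h<1$ on this range. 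When $a^{2}<2-\sqrt{2}$ we instead use the standing assumption $1/b<a<1$ with $b\in\mathbb{N}$, which forces $b\geq 2$; then $b^{2}(1-a^{2})\geq 4(1-a^{2})>2-a^{2}$, since $a^{2}<2-\sqrt{2}<2/3$. Either way $h<1$, so each $S_i$ is a strict contraction with factor at most $h$.
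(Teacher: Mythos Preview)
Your proof is correct, but it takes a different route from the paper's proof of this lemma. The paper argues more crudely: after the same MVT step, it bounds the cross term via AM--GM, $\bigl|\tfrac{2ac}{b}\Delta x\,\Delta y\bigr|\le \tfrac{ac}{b}\bigl((\Delta x)^2+(\Delta y)^2\bigr)$, and then reduces the contraction inequality to the two scalar conditions $\tfrac{1+c^2}{b^2}+\tfrac{ac}{b}<1$ and $a^2+\tfrac{ac}{b}<1$; maximizing over $c\le 1$ gives $\tfrac{2}{b^2}+\tfrac{a}{b}<1$ (automatic for $b\ge 2$) and the hypothesis $a^2+\tfrac{a}{b}<1$. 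No eigenvalues are computed at this stage; the quadratic-form/eigenvalue analysis you carry out is exactly what the paper postpones to the proof of Theorem~\ref{maint} in order to extract the sharp factor $h$.

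What each approach buys: the paper's AM--GM argument is quicker and avoids the somewhat delicate verification that $h<1$ (your two-case split on $a^2\gtrless 2-\sqrt{2}$). Your approach, on the other hand, does double duty---it simultaneously proves the lemma and produces the exact contraction bound $h$ needed later, so Lemma~\ref{l1} can be applied immediately. One small point: when you pass from $h^2<1$ to the squared inequality you are implicitly using that $2-a^2-2/b^2>0$, which holds since $b\ge 2$ and $a<1$; it would be cleaner to note this explicitly before squaring.
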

\begin{proof}Choose distinct points ${\bf{x}_1} = (x_1, y_1)$ and ${\bf{x}_2} =  (x_2, y_2)$ in $W$.  We need to show that
\begin{equation}\label{contr1}
d(S_i({\bf{x}_1}), S_i({\bf{x}_2})) < d({\bf{x}_1},{\bf{x}_2}).
\end{equation}
The left-hand side is
$$
d(S_i({\bf{x}_1}), S_i({\bf{x}_2}))  = \sqrt{ \left[\frac{\Delta x}{b}\right]^2 + \left[ a\Delta y + \phi\left(\frac{x_1 + i}{b}\right) - \phi\left(\frac{x_2 + i}{b}\right)\right]^2}
$$
where $\Delta x = x_1-x_2$ and $\Delta y = y_1-y_2$. Since $\phi\in C^1$, applying the mean value theorem there is a positive number $c <1$ so that 
$$
\bigg| \phi\left(\frac{x_1 + i-1}{b}\right) - \phi\left(\frac{x_2 + i-1}{b}\right)\bigg| = \frac{c}{b}|\Delta x|.
$$
Plugging this in and expanding, Equation (\ref{contr1}) beomes
\begin{equation}\label{22}
\sqrt{ \frac{1 + c^2}{b^2} (\Delta x)^2 + a^2(\Delta y)^2 + \frac{2ac}{b} \Delta x \Delta y} < \sqrt{ (\Delta x)^2 + (\Delta y)^2}.
\end{equation}

Applying the AM-GM inequality, 
\begin{equation}\label{e3}
\bigg|\frac{2ac}{b} \Delta x \Delta y \bigg| \leq \frac{ac}{b} \bigg((\Delta x )^2 + (\Delta y)^2\bigg).
\end{equation}
Squaring both sides of Equation (\ref{22}), applying the triangle inequality and Equation (\ref{e3}),  and splitting  the $(\Delta x)^2$  and $(\Delta y)^2$ terms, we see Equation (\ref{contr1}) will follow if we show that
\begin{equation}
\frac{ 1 + c^2}{b^2} + \frac{ac}{b} < 1 \ \ \ \ \text{and} \ \ \ \ a^2+ \frac{ac}{b} <1.  
\end{equation}
Noting that the left-hand side of both of these inequalities is an increasing function of $c$, which is the value of the derivative of $\phi$ at some point, and $|\phi' |\leq1$, we see $c\leq1$ which leads to
\begin{equation}
\frac{ 2}{b^2} + \frac{a}{b} < 1 \ \\ \ \ \text{and}  \ \ \ a^2+ \frac{a}{b} <1.  
\end{equation}
The first equation always holds, since $|a|<1$ and $b\in \mathbb{N} >1$. The second equation holds as that is precisely the assumption on the coefficients in the statement of the main theorem. 
\end{proof}

\section{Proof of Theorem \ref{maint}}

Armed now with the knowledge that our the mappings $S_i$ are contraction mappings, the strategy of our proof is   to apply Lemma \ref{l1} to estimate the Hausdorff dimension.

 \begin{proof}
 From Lemma \ref{l1}, it is clear that we need to estimate the contraction factors $u_i$.  Following the lines of the proof of Lemma \ref{l2}, choose  distinct points ${\bf{x}_1} = (x_1, y_1)$ and ${\bf{x}_2}= (x_2, y_2) \in W$. Then $d^2(S_i( {\bf{x}_1}), {S_i(\bf{x}_2} ))$ can be written in matrix form as
\begin{equation}
\left(\begin{array}{cc} \Delta x & \Delta y\end{array}\right) \left(\begin{array}{cc} \frac{1 + c^2}{b^2} & \frac{ac}{b} \\
\frac{ac}{b} & a^2 \end{array}\right) \left(\begin{array}{c} \Delta x \\ \Delta y \end{array} \right).
\end{equation} 

Now view $v = (\Delta x, \Delta y)$ as an element of $\mathbb{R}^2$: the question is how to extremize  $\sqrt{v^TAv}$, where $T$ denotes the transpose and $A$ is the positive definite symmetric matrix
\begin{equation}\label{mat1}
\left(\begin{array}{cc} \frac{1 + c^2}{b^2} & \frac{ac}{b}\\
\frac{ac}{b} & a^2 \end{array}\right). 
\end{equation}
The alert reader will note that  this is not a matrix with constant coefficients, since $c$ is determined, via the Mean-Value Theorem, by $x_1$ and $x_2$ and so  ultimately depends upon ${\bf{x_1}}$ and ${\bf{x_2}}$. Our proof proceeds by fixing $c$, so that  Equation (\ref{mat1}) is regarded as  a fixed symmetric matrix $A$. It is a standard fact that a positive definite  symmetric matrix has positive real eigenvalues and that   $\sqrt{v^TAv} \leq \sqrt{\lambda}\|v\|$, where $\lambda$ denotes the largest eigenvalue of $A$.  We then vary obtain an upper bound that is independent of $c$.  For two distinct points  ${\bf{x_1}}, {\bf{x_2}}\in W$ there will be a corresponding  $c$ in the formula for $d(S_i( {\bf{x}_1}), {S_i(\bf{x}_2} ))$ and thus a corresponding matrix of the form (\ref{mat1}).  As our upper bound is independent of $c$ we can thus   estimate the contraction factor of $S_i$.

 A straightforward computation shows the eigenvalues of this matrix are is 
$$
    \lambda_{\pm} = \frac{1}{2}\left(\frac{1+c^2}{b^2}+a^2\right)\pm\frac{1}{2}\sqrt{\left(\frac{1+c^2}{b^2}+a^2\right)^2-\frac{4a^2}{b^2}}.
$$
If there is only one eigenvalue, $\left(\frac{1+c^2}{b^2}+a^2\right)^2-\frac{4a^2}{b^2} = 0$ which implies that $ab = 1 \pm \sqrt{-c^2}$, an immediate contradiction because $ab$ is real. So, there cannot be one repeated eigenvalue and hence there must be two distinct eigenvalues. For our purposes, we need only the larger eigenvalue to establish the upper bound. Hence we focus on 
\[
	\lambda = \frac{1}{2}\left(\frac{1+c^2}{b^2}+a^2\right)+\frac{1}{2}\sqrt{\left(\frac{1+c^2}{b^2}+a^2\right)^2-\frac{4a^2}{b^2}}.
\]
Note this is an increasing function of $c$. As $|\phi'|\leq1$, we set $c=1$ to obtain
\[
\lambda_{\text{max}} = \frac{1}{2}\left[\frac{2}{b^2}+a^2+\sqrt{\frac{4}{b^4}+a^4}\right].
\]
This directly implies an upper bound for the contraction factor for each $S_i$  is \[u_i = \sqrt{\lambda_{\text{max}}}:= h \ \ \ \ \ \  1\leq i \leq b.\]
 Hence, by Lemma \ref{l2} an upper bound on the Hausdorff dimension of the graph of $w$ is  given by solving $bh^s=1.$ Equivalently,
$$
    s =\log_{h}(1/b).$$
The result now follows. 
\end{proof}

\section*{Acknowledgments} 
 
We thank the Department of Mathematics at Cal State Fullerton for encouraging undergraduate research and for supporting T.A. with a summer research scholarship. T. M. thanks the mathematics department at  UC Irvine for their hospitality whilst this work was written up. Both authors thank K. Bara\'nski and the anonymous referee for helpful comments.

\newpage 

{\footnotesize

{\footnotesize  
\medskip
\medskip
\vspace*{1mm} 
 
\noindent {\it Tommy Murphy}\\  
Department of Mathematics,\\
CSU Fullerton,\\
800 N. State College Blvd.,\\
Fullerton CA 92831.\\
E-mail: {\tt tmurphy@fullerton.edu}\\  
\href{http://www.fullerton.edu/math/faculty/tmurphy/}{http://www.fullerton.edu/math/faculty/tmurphy/}\\

\noindent {\it Ted Alexander}\\  
Department of Mathematics,\\
CSU Fullerton,\\
800 N. State College Blvd.,\\
Fullerton CA 92831.\\
E-mail: {\tt tedforpresident@gmail.com}


}

\end{document}